\documentclass{siamart190516}
\usepackage{times}
\usepackage[T1]{fontenc}

\usepackage{multirow}
\usepackage{braket,amsfonts,amsopn,bm}
\usepackage{graphicx,epstopdf}
\usepackage{array}
\usepackage[linesnumbered,ruled,vlined,algo2e]{algorithm2e}
\usepackage{mathrsfs}

\usepackage{amssymb,amsfonts,amsmath,latexsym,epsfig,euscript,color}

\newcommand{\RR}{\mathbb R}

\newcommand{\NN}{\mathbb N}
\usepackage{bm}

\newcommand{\bmm}{{\bm m}}
\newcommand{\bn}{{\bm n}}
\newcommand{\bW}{{\bm W}}
\newcommand{\bX}{{\bm X}}
\newcommand{\bY}{{\bm Y}}
\newcommand{\bH}{{\bm H}}
\newcommand{\bS}{{\bm S}}
\newcommand{\bM}{{\bm M}}
\newcommand{\bK}{{\bm K}}
\newcommand{\bN}{{\bm N}}
\newcommand{\bR}{{\bm R}}
\newcommand{\bC}{{\bm C}}
\newcommand{\bD}{{\bm D}}
\newcommand{\bQ}{{\bm Q}}
\newcommand{\bF}{{\bm F}}
\newcommand{\bZ}{{\bm Z}}
\newcommand{\bG}{{\bm G}}
\newcommand{\bI}{{\bm I}}
\newcommand{\bA}{{\bm A}}
\newcommand{\bB}{{\bm B}}
\newcommand{\bLambda}{{\bm \Lambda}}
\newcommand{\bE}{{\bm E}}
\newcommand{\bd}{{\bm d}}
\newcommand{\bu}{{\bm u}}

\newcommand{\bv}{{\bm v}}
\newcommand{\blambda}{{\pmb \lambda}}
\newcommand{\bbf}{{\bm f}}

\newcommand{\trace}{{\rm trace}}
\newtheorem{remark}[theorem]{Remark}
\newtheorem{example}[theorem]{Example}

\newcommand{\XX}{{\bm X}}

\newcommand{\bx}{{\bm x}}

\begin{document}

\title{Numerical solution of a class of quasi-linear matrix equations
\thanks{Version of August 11, 2022. The authors are members of the INdAM Research Group GNCS that partially supported this  work.} }{}
\date{\today}
\author{Margherita Porcelli\thanks{Dipartimento di Matematica, AM$^2$,
        Alma Mater Studiorum - Universit\`a di Bologna, Piazza di Porta San Donato 5,
        40126 Bologna, Italia. Emails:
{\tt  \{margherita.porcelli,valeria.simoncini\}@unibo.it}}\and{\,}\thanks{ISTI--CNR, Via Moruzzi 1, Pisa, Italia}
 \and Valeria Simoncini$^{\dagger,}$\thanks{IMATI-CNR, Via Ferrata 5/A, Pavia, Italia}}

\maketitle

\begin{abstract}
Given the matrix equation
$\bA \XX +  \XX \bB + f(\XX) \bC =\bD$ in the unknown $n\times m$ matrix $\bX$,
we analyze existence and uniqueness conditions, together with
computational solution strategies for
$f \,: \RR^{n \times m} \to \RR$ being a linear or nonlinear function.
We characterize different properties of the matrix equation and of its
solution, depending on the considered classes of functions $f$.
Our analysis mainly concerns small dimensional problems, though several
considerations also apply to large scale matrix equations.
\end{abstract}

\begin{keywords}
Matrix equations. Sylvester equation. Matrix functions. Fixed point iteration.
\end{keywords}

\begin{AMS}
65H10, 65F10, 65F45, 15A06.
\end{AMS}

\section{The problem}
We consider the following nonlinear equation
\begin{equation}\label{eqn:main}
\bA \XX +  \XX \bB + f(\XX) \bC =\bD ,
\end{equation}
in the unknown matrix $\bX\in\RR^{n\times m}$,
where $f : \RR^{n\times m} \to \RR$ is a linear or nonlinear function, 
while $\bA\in\RR^{n\times n}$, $\bB\in\RR^{m\times m}$, and $\bC, \bD\in\RR^{n\times m}$ are 
given matrices. Throughout the
paper we assume that $\bA$ and $-\bB$ have no common eigenvalues, so that
the operator ${\cal L} : \XX \mapsto \bA\XX+\XX \bB$ is invertible.
Since the nonlinear function in $\bX$ yields a scalar contribution to the matrix
equation, we will refer to this problem as a {\em quasi-linear} matrix equation.
We also notice that depending on the type of function $f$,
the condition $m=n$ may also hold, and this will be assumed throughout without explicit mention.

 Equation (\ref{eqn:main}) is among the simplest possible 
generalizations of the
Sylvester equation to more than two terms in the unknown matrix $\bX$.
Yet, it provides different intriguing challenges for its numerical solution,
that we aim to address. A natural further generalization is the inclusion
of more quasi-linear terms. 
Our interest in this problem stems from certain applications with linear $f$, see section~\ref{sec:appl} and
\cite{Padovani.Porcelli.22}, however we believe that the general case of $f$ nonlinear
may find applications in different contexts where the given mathematical
problem can be formulated in terms of a matrix equation. To the best of our knowledge,
no numerical methods have been presented in the literature
for the class of problems considered in (\ref{eqn:main}).

To begin our analysis, we observe that by letting
$\bN=-{\cal L}^{-1}(\bC)$, $\bM={\cal L}^{-1}(\bD)$, problem (\ref{eqn:main}) is
mathematically equivalent to
\begin{equation}\label{eqn:main1}
 \XX = \bM+  f(\XX) \bN.
\end{equation}
This equation provides the ideal computational setting in case $n, m$ are small,
as attention can be put into the function $f$, assuming that
$\bM, \bN$ can be computed accurately. In case of matrices with large
dimensions, the equality in (\ref{eqn:main1}) will in general be replaced by an approximation.

We start by considering the case of linear $f$, which was motivated by an
application in solid mechanics and civil engineering developed in \cite{Padovani.Porcelli.22}.
A linear function $f : \RR^{n\times m} \to \RR$ can be defined  as 
$f(\bX) = {\rm trace}(\bH \bX)$ for some matrix $\bH$ of appropriate dimensions.
For instance, for $\bH$ equal to the identity matrix and $\bX$ square, $f(\bX) = {\rm trace}(\bX)$,
while for $\bH=\bu\bv^T$ with $\bu\in\RR^m, \bv\in\RR^n$, $f(\bX) = \bv^T \bX \bu$, where the properties of the
trace have been used.
We will derive a closed form for $\bX$, and also observe that under
certain hypotheses $f(\bX)$ may be obtained without explicitly computing $\bX$.

We then analyze a more general setting where
$f$ is the composition of a linear and a nonlinear function. The order in
which these two functions are combined significantly influences the analysis and results:
as an example, different existence and uniqueness properties may hold. So,
for instance, working with
$f(\bX)  = {\rm trace}(\exp(-\bX))$ (linear combined with nonlinear) differs significantly from dealing with
$f(\bX)  = \exp(-{\rm trace}(\bX))$ (nonlinear combined with linear). 
Distinct computational procedures also need to be devised. 

We will explore iterative techniques that appropriately handle both $f$
and the matrices forming the linear part of the equation.
The linear-nonlinear problem is more computationally involved as the
iteration requires matrix function evaluations and matrix updates. For this
problem we will 
derive convergence results for a natural fixed-point iteration. In
the nonlinear-linear case, the nonlinear iteration is performed at the scalar level
and classical results for nonlinear equations can be employed, while taking into account
the properties of the given data.

The following notation is adopted: matrices (resp. vectors) are denoted by bold case capital (resp. small) 
roman letters, while small roman letters are used for real valued functions (with the exception
of matrix indices and matrix dimensions), and greek letters are used for scalars.
The notation $\bA \succ 0$ ($\bA \succeq 0$) denotes
a symmetric and positive definite (semidefinite) matrix $\bA$; the notation $\bA \succeq \bB$ is equivalent
to $\bA -\bB \succeq 0$. For a given matrix $\bX$, the operator {\rm vec}($\bX$) stacks all columns
of $\bX$ one below the other into a single long vector, while the Kronecker operator of
two matrices $\bA\in \RR^{n_A\times m_A}$ and  $\bB\in \RR^{n_B\times m_B}$, is given by
$$
  \bA \otimes \bB = 
 \begin{bmatrix}
  {a}_{11}B & {a}_{12} \bB & \cdots & {a}_{1 m_A} \bB \\
  {a}_{21}B & {a}_{22} \bB & \cdots & {a}_{2 m_A} \bB \\
  \vdots &          &        & \vdots      \\
  {a}_{n_A 1}\bB & {a}_{n_A 2} \bB & \cdots & {a}_{n_A m_A} \bB \\
 \end{bmatrix} \in \RR^{n_A n_B \times m_A m_B}.
$$

\section{The case of linear $f$}
The following proposition yields the solution of (\ref{eqn:main}) in closed form
when $f$ is a linear function.

\begin{proposition}\label{prop:f(X)}  
Let $\bM, \bN$ be the solutions to the 
Sylvester equations $\bA\bM+\bM\bB=\bD$ and $\bA\bN+\bN\bB=-\bC$, respectively. Assume that
$1-f(\bN)\ne 0$. Then the solution to (\ref{eqn:main}) is given by
$$
 \bX = \bM + \sigma \bN, \quad \sigma = \frac{f(\bM)}{1-f(\bN)}.
$$
\end{proposition}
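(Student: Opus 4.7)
The plan is to exploit the equivalent reformulation (\ref{eqn:main1}), namely $\bX = \bM + f(\bX)\bN$, which the excerpt already derives from the invertibility of $\mathcal{L}$ together with the definitions of $\bM$ and $\bN$. The key observation is that the nonlinearity in the original matrix equation reduces, once we work with (\ref{eqn:main1}), to a single scalar unknown: the value $f(\bX)$.

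First I would apply the (linear) functional $f$ to both sides of $\bX = \bM + f(\bX)\bN$. Since $f$ is linear and $f(\bX)$ is a scalar, this yields
\begin{equation*}
  f(\bX) = f(\bM) + f(\bX)\,f(\bN),
\end{equation*}
a single scalar equation in the scalar unknown $f(\bX)$. Under the hypothesis $1 - f(\bN) \ne 0$ this equation has the unique solution $f(\bX) = f(\bM)/(1-f(\bN)) =: \sigma$.

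Next I would substitute this value back into (\ref{eqn:main1}) to obtain $\bX = \bM + \sigma \bN$, giving both existence and the announced closed form. For completeness I would also verify directly that this $\bX$ satisfies (\ref{eqn:main}): applying $\mathcal{L}$ to $\bM + \sigma \bN$ and using $\mathcal{L}(\bM) = \bD$, $\mathcal{L}(\bN) = -\bC$ gives $\bD - \sigma \bC$, and one checks by linearity of $f$ that $f(\bM + \sigma \bN)\bC = (f(\bM) + \sigma f(\bN))\bC = \sigma \bC$, so the two $\sigma\bC$ terms cancel correctly.

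There is essentially no hard step here; the only thing to be careful about is the logical structure, namely that the condition $1 - f(\bN) \ne 0$ is precisely what makes the auxiliary scalar equation uniquely solvable, and that linearity of $f$ (so that $f(\bM + f(\bX)\bN) = f(\bM) + f(\bX)f(\bN)$) is used in an essential way. The argument also makes transparent why uniqueness holds within the class of solutions reachable from (\ref{eqn:main1}): $\sigma$ is forced.
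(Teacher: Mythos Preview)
Your proposal is correct and follows essentially the same approach as the paper: rewrite the equation in the form $\bX=\bM+f(\bX)\bN$, apply the linear $f$ to both sides to obtain and solve the scalar equation for $f(\bX)=\sigma$, substitute back, and then verify by direct computation that $\bX=\bM+\sigma\bN$ satisfies (\ref{eqn:main}).
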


\begin{proof}
The problem can be written as in (\ref{eqn:main1}).
Applying the linear function $f$ to both sides of (\ref{eqn:main1}) yields
$f(\bX) = f(\bM) + f(\bX) f(\bN)$, that is
$f(\bX) = f(\bM)/(1-  f(\bN))$.
Substituting in (\ref{eqn:main1}) the expression for $\bX$ follows. Finally, by linearity,
\begin{eqnarray*}
\bA \XX +  \XX \bB + f(\XX) \bC -\bD  &=& 
\bA (\bM + \sigma \bN) + (\bM + \sigma \bN) \bB + f(\XX) \bC -\bD \\
&=&  \sigma \bA \bN + \sigma \bN \bB + f(\XX) \bC  = -\sigma \bC + f(\XX) \bC=0,
\end{eqnarray*}
which verifies that $\bX$ solves (\ref{eqn:main}).
\end{proof}

In case it holds that $1-  f(\bN)=0$, the relation $f(\bX) = f(\bM) + f(\bX) f(\bN)$ shows
that two possible scenarios arise: for $f(\bM)=0$ then $\bX = \bM + \sigma \bN$ where $\sigma$
can be any real number, yielding a nonunique solution; if $f(\bM)\ne 0$ then no solutions exist.

It is interesting to observe that when $f(\bX)= \bu^T \bX \bu$ and $\bC= \bv \bv^T$,
it holds that $f(\bX) C = (\bv \bu^T) \bX \bu \bv^T$. The problem thus corresponds to
the linear matrix equation
$$
\bA \bX + \bX \bB + \bK^T \bX \bK = \bD
$$
with $\bK=\bu\bv^T$ of rank one; more generally, depending on $f$ and $\bC$, the term $\bK^T \bX \bK$ may
take the form $\bK_1 \bX \bK_2$, with $\bK_2$ not necessarily the transpose of $\bK_1$.
With the previous choice of $\bK$, the closed form solution  in Proposition~\ref{prop:f(X)} is equivalent
to the Sherman-Morrison-Woodbudy (SMW) formula obtained by the vector form of the matrix equation.
Indeed, the matrix equation above can be written as $(\bG + {\cal V}{\cal U}^T) \bx = \bd$,
where $\bG = \bI\otimes \bA +  \bB^T \otimes \bI$, ${\cal U}=\bu\otimes\bu$,
${\cal V} = \bv\otimes\bv$ and $\bd = {\rm vec}(\bD)$; see, e.g., \cite{Hao.Simoncini.21b}.
Then the SMW formula reads
\begin{eqnarray*}
\bx &=& \bG^{-1} \bd - \bG^{-1}{\cal V} (1 + {\cal U}^T \bG^{-1} {\cal V})^{-1} {\cal U}^T
\bG^{-1} \bd \\
&=& {\rm vec}( {\cal L}^{-1}(\bD)) - \sigma \, {\rm vec}( {\cal L}^{-1}(\bC)),
\end{eqnarray*}
where $\sigma = (1 + {\cal U}^T \bG^{-1} {\cal V})^{-1} {\cal U}^T
\bG^{-1} \bd$, which precisely corresponds to $\sigma$ in Proposition~\ref{prop:f(X)},
as ${\cal U}^T \bG^{-1} {\cal V}=\bu^T {\cal L}^{-1}(\bv \bv^T) \bu = {-f(\bN)}$ and
similarly for the other quantities. The cases where ${\cal U}=\bu_1\otimes\bu_2$,
${\cal V} = \bv_1\otimes\bv_2$ can be treated analogously.

It is also noticeable that for $f(\bX)=\trace(\bX)$ and $\bB=\bA$ with
$\bA$ nonsingular, the quantity $\trace(\bX)$ can be obtained
without solving two Sylvester equations, but by only solving linear systems with $\bA$.
Indeed, from $\bA \bX +  \bX \bA + \trace(\bX) \bC = \bD$
we write
$$
 \bX +  \bA^{-1} \bX \bA + \trace(\bX)\bA^{-1}  \bC =\bA^{-1}  \bD.
$$
Applying the trace to all matrix terms we obtain
$$
\trace(\bX) = \frac{\trace(\bA^{-1}  \bD)}{2 + \trace(\bA^{-1}  \bC)},
$$
where we have used the linearity and cyclic property of the trace.
After this computation, the final $\bX$ is obtained by solving the Sylvester equation
$\bA \bX +  \bX \bA = \bD- \trace(\bX) \bC$.
The actual number of systems with $\bA$ depends on the structure of $\bD$ and $\bC$.
For instance, if $\bC=\bC_1\bC_2^T$ has low rank equal to $k$ and $\bC_1\in\RR^{n\times k}$, 
then only $k$ systems with $\bA$ need to be solved to compute
$\trace(\bA^{-1}  \bC)= \trace(\bC_2^T \bA^{-1}  \bC_1)$. Other properties 
of the involved matrices can be exploited to lower the computational efforts.


\begin{remark}\label{rem:solve}
The trace of the Sylvester solution matrix is of interest in its own right; see, e.g., 
\cite{Wang.Kuo.Hsu.86},\cite{Truhar.Veselic.07},\cite{Savov.Popchev.08} and their references.
In particular, for $\bB=\bA$ symmetric and $\bC=0$,
the procedure discussed in Proposition~\ref{prop:f(X)} can be used to compute the trace of 
the solution to $\bA \bX +  \bX \bA  = \bD$, without
explicitly computing or approximating the solution matrix.

This fact can be used for instance if in problem (\ref{eqn:main}) one is interested in
only computing the trace of $\bX$, and not $\bX$ itsself.  In this case, 
trace($\bM)$, trace($\bN)$ can be obtained without explicitly computing the two matrices $\bM, \bN$.
\end{remark}

In a way similar to Proposition~~\ref{prop:f(X)} one can treat the related problem
\begin{equation}\label{eqn:main_multi}
\bA \bX +  \bX \bB + f_1(\bX) \bC_1 + \ldots f_\ell(\bX) \bC_\ell =\bD ,
\end{equation}
where $f_i$, $i=1, \ldots, \ell$ are linear functions of their argument.
Indeed, writing once again
\begin{equation}\label{eqn:main_multi1}
\bX = \bM + \sum_{i=1}^\ell f_i(\bX) \bN_{i}, \quad {\bM={\cal L}^{-1}(\bD),}  \,\,\bN_{i} = -{\cal L}^{-1}(\bC_i), 
\end{equation}
we can compute
$$
f_j(\bX) = f_j(\bM) + \sum_{i=1}^\ell f_i(\bX) f_j(\bN_{i}), \qquad j=1, \ldots, \ell .
$$
Let $\sigma_j = f_j(\bX)$.
Collecting all quantities, we obtain the $\ell\times \ell$ linear system
{\footnotesize
\begin{equation}\label{eq:sysf}
\begin{bmatrix}
1-f_1(\bN_{1}) & -f_1(\bN_{2}) & \cdots & -f_1(\bN_{\ell}) \\
-f_2(\bN_{1})  & 1-f_2(\bN_{2}) & \cdots & -f_2(\bN_{\ell}) \\
\vdots & \vdots & \ddots & \vdots \\
-f_\ell(\bN_{1}) &  \cdots & \cdots & 1-f_\ell(\bN_{\ell}) 
\end{bmatrix}
\begin{bmatrix}
\sigma_1 \\ \vdots \\ \sigma_\ell
\end{bmatrix}
= 
\begin{bmatrix}
f_1(\bM) \\ \vdots \\ f_\ell(\bM) 
\end{bmatrix} \,\, \Leftrightarrow \,\, (\bI-\bF) {\bm \sigma} = \bbf,
\end{equation}
}
where $\bI$ is the identity matrix of matching dimensions.
Solving this small linear system yields the coefficients in
$$
\bX = \bM + \sum_{i=1}^\ell \sigma_i \bN_{i},
$$
which generalizes the formula in Proposition~\ref{prop:f(X)}.
In general, the cost of solving this system remains moderate compared
with all other computational costs as long as $\ell$ is significantly lower than $n$.
Clearly, the solution uniqueness is related to the nonsingularity of $\bI-\bF$.
A well known sufficient condition for the nonsingularity is that $\|\bF\| < 1$ {where
$\|\cdot\|$ is any induced matrix norm}.

\subsection{An application to solid mechanics}\label{sec:appl}
{The modelling of masonry-like materials
calls for the computation of the projection of a symmetric matrix onto the cone of negative semidefinite 
symmetric matrices with respect to the inner product defined by an assigned positive
definite symmetric linear map ${\cal C}$, associating $n\times n$ symmetric matrices with $n\times n$ symmetric matrices.
%
The map $\cal C$ contains the mechanical properties of the masonry material and can take different forms depending on the anisotropy of the material. 
When $\cal C$ models the elasticity tensor of an isotropic elastic material,
it takes the form
\begin{equation}\label{CI}
{\cal C} (\bX) = \frac{E}{1+\nu} \left (\bX +\frac{\nu}{1-2\nu} \trace(\bX) \bI\right ),
\end{equation}
where
$E$ is Young's modulus, $E>0$, and $\nu$ is the Poisson ratio, satisfying $\nu \in (-1,1/2)$. 
When, on the other hand, $\cal C$ represents a transversely isotropic elasticity tensor with respect to the direction 
$\mathbf{e}_{3}$, then it can be written as
%
${\cal C} (\bX) =\sum_{i=1}^{\ell}  {\trace(\bH_i \bX) \bK_i}$,
for $\ell = n(n+1)/2$ and suitable symmetric matrices $\bH_i, \bK_i \in \RR^{n\times n}$
for $i = 1, \dots, \ell$ which depend on the scalars $E$ and $\nu$ and on the spectral representation of $\cal C$
\cite{Padovani.02}.

For a given symmetric matrix $\bar \bY$, in \cite{Padovani.Porcelli.22} the projection problem was reformulated as the following  quadratic
semidefinite programming problem 
\begin{equation}\label{sdp_primal}
  \begin{array}{ll}
     \min_{\bY} & \trace(\bY {\cal C}(\bY + \bar \bY)) \\
     \mbox{s.t. } &  \bY \succeq 0, \\
    \end{array}
\end{equation}
and a primal-dual path-following interior point method was proposed. At each iteration of the interior-point method, one Newton step is computed for the following perturbed first-order optimality conditions for problem (\ref{sdp_primal}) 
\begin{equation}\label{F_mu}
F_{\mu}(\bY,\bS)=  \left (\begin{array}{c}
 \bS -{\cal C}(\bY +\bar \bY)\\
 \bY \bS- \mu \bI
\end{array}
\right )=\mathbf{0}, \qquad \bY \succ 0,\  \bS \succ 0,
\end{equation}
where the positive scalar $\mu$ is driven to zero as the method progresses. 
To ensure that the Newton steps produce symmetric matrices, 
different symmetrization schemes can be applied to the nonlinear equation $\bY \bS- \mu \bI = 0$ in (\ref{F_mu}):
the popular Alizadeh-Haeberly-Overton (AHO) and Nesterov-Todd (NT) schemes 
have been explored in \cite{Padovani.Porcelli.22}. Fixed $\mu>0$ and given the current approximation
$(\bY, \bS)$ of the solution of (\ref{F_mu}), let $\bX$ denote the Newton step for the variable $\bY$. 
Consider first the AHO scheme: $\bX$ solves the equation
\begin{equation}\label{eqn:eqnC}
\bS \bX + \bX \bS + {\cal C} (\bX) \bY + \bY {\cal C} (\bX) = \bD,
\end{equation}
where the right-hand side $\bD = 2\mu \bI-(\bY\bS+\bS\bY)-(\bY({\cal C}(\bY+\bar \bY) -\bS) + ({\cal C}(\bY+\bar \bY) -\bS)\bY )$ takes into account the
value of the current $F_{\mu}(\bY,\bS)$ and the AHO symmetization.
When $\cal C$ is isotropic, inserting the form (\ref{CI}) into (\ref{eqn:eqnC}) yields
$$
\left (\bS + \frac{E}{1+\nu} \bY \right) \bX + \bX \left(\bS + \frac{E}{1+\nu} \bY\right) + 
 \trace(\bX) \frac{\nu E}{(1+\nu)(1-2\nu)}\bY  = \bD,
$$
that corresponds to (\ref{eqn:main}) with $\bA = \bB = \left (\bS + \frac{E}{1+\nu} \bY \right)$ 
and $\bC = \frac{\nu E}{(1+\nu)(1-2\nu)}\bY$.
%
%
If $\cal C$ is transversely isotropic, the terms involving ${\cal C}$ in (\ref{eqn:eqnC}) are given by
$$
{\cal C} (\bX) \bY + \bY {\cal C} (\bX) = \sum_{i=1}^{ \ell}  \trace(\bH_i \bX) (\bK_i \bY + \bY \bK_i) \equiv 
\sum_{i=1}^{ \ell} f_i(\bX) \bC_i,
$$
yielding 
$$
\bA \bX + \bX \bA + \sum_{i=1}^{ \ell} f_i(\bX) \bC_i = \bD,
$$
with $\bA = \bS $, which thus corresponds to (\ref{eqn:main_multi}).

In the case of the NT scheme, the Newton step solves the general equation
$\bW \bX \bW + {\cal C} (\bX)  = \bD$ with $\bW \succ 0$ being the geometric mean of $\bY^{-1}$ and $\bS$,
 and $\bD$ is suitably defined taking into account the residual $F_{\mu}(\bY,\bS)$ and the NT scheme, see e.g. \cite{Todd.Tutuncu.Toh.07}.
If $\cal C$ is isotropic, the equation above reads
$$
\bW \bX \bW + \frac{E}{1+\nu} \bX + \frac{\nu E}{(1+\nu)(1-2\nu)}\trace(\bX) \bI = \bD.
$$
Dividing by $\bW$,
$$
\bX \bW + \frac{E}{1+\nu}\bW^{-1} \bX + \trace(\bX) \frac{\nu E}{(1+\nu)(1-2\nu)} \bW^{-1} = \bW^{-1}\bD,
$$
that is in the form (\ref{eqn:main}) with $\bA = \frac{E}{1+\nu}\bW^{-1}$,  $\bB = \bW$ 
and $\bC = \frac{\nu E}{(1+\nu)(1-2\nu)} \bW^{-1}$. 
Finally, for the transversely isotropic case one obtains the equation
$$
\bX = \bM + \sum_{i=1}^{ \ell} f_i(\bX) \bN_i, \quad 
\bM = \bW^{-1} \bD \bW^{-1}, \,\, \bN_i =- \bW^{-1} \bK_i \bW^{-1},
$$
which has the form (\ref{eqn:main_multi1}), with $f_i(\bX)=  \trace(\bH_i \bX), i=1, \dots, \ell$.

We remark that the explicit form of Newton step above within the NT scheme is a generalization
of the formula given in  \cite[Lemma 5.1]{Todd.Tutuncu.Toh.07} 
for the case ${\cal C}(\bX) = \bK \bX \bK $ and $\bK \succeq 0$.


\section{The trace of a matrix power}\label{sec:trace_power}
A first generalization to the nonlinear setting is given by
the family of functions $f(\bX)=  \trace(\bX^p)$, with $p\in\NN$, $p>1$.
For moderate $p$ such as $p=2$, it is possible to give explicit solutions to the
problem.
%
We focus on the effect of
$f$ on the matrix equation, where we work with the form in (\ref{eqn:main1}). 

Let $p=2$. We have
\begin{eqnarray*}
f(\bX)&=&  \trace(\bX^2)= \trace( (\bM +  f(\bX)\bN) (\bM +  f(\bX)\bN) ) \\
&=&
\trace(\bM^2)+  2\,\trace(\bM \bN) f(\bX) + f(\bX)^2 \trace(\bN^2) \\
&=&
f(\bM)+  2\,\trace(\bM \bN) f(\bX) + f(\bX)^2 f(\bN).
\end{eqnarray*}
Let $\beta = 2\,\trace(\bM \bN) -1$.
The equation above corresponds to the following (scalar) quadratic algebraic equation in the variable 
 $r=f(\bX)$,
$$
r^2 f(\bN)  +  \beta r + f(\bM) =0 .
$$
If $f(\bN)=0$ and $\beta\ne 0$ then the solution is $r=-f(\bM)/\beta$,
giving $\bX =  \bM + r \bN$.
If $f(\bN)\ne 0$ then the following two solutions are derived,
$$
r_{1,2} = \frac {1}{2 f(\bN)} \left( -\beta \pm \sqrt{\beta^2 - 4 f(\bN) f(\bM)}\right).
$$
The two final solution matrices $\bX_{(1)}, \bX_{(2)}$ are obtained as 
$$
 \bX_{(1)} =  \bM + r_{1} \bN ,
\qquad
\bX_{(2)} =  \bM + r_{2} \bN .
$$

For higher powers of $\bX$, correspondingly larger degree scalar polynomial equations are obtained,
from which the corresponding {\it numerical} solution matrices can be derived, in case the
roots can only be computed numerically. 
The procedure may also yield complex (conjugate) values for $r$ even for real data, 
from which complex (conjugate) solutions will follow.

Powers of affine functions can also be
considered, such as $f(\bX)=\trace( (\bX+\bH)^p)$, for a fixed matrix $\bH$.
A similar solution procedure can be devised for other, related functions such as the Frobenius norm, that is
$$
f(\bX) = \|\bX\|_F^2 = \trace(\bX^T \bX).
$$

A second generalization for which explicit solutions can be obtained under
certain hypotheses is the function $f(\bX)= \trace(\bX^{-1})$. 

\begin{proposition}\label{prop:inv}
Let $\bM=\bmm_1 \bmm_2^T$ be a rank-one matrix and $\bN$ be invertible.
Let the nonlinear function be $f(\bX)= \trace(\bX^{-1})$. 
If the matrix equation $\bX=\bM + f(\bX)\bN$ admits nonsingular solutions,
then these solutions  are given
as $X_{(i)} = \bM + r_i \bN$, $i=1,\ldots, 3$ where $r_i$ are the roots of the
polynomial equation
$$
r^3 + \eta_2 r^2 + \eta_1 r + \eta_0 = 0, 
$$
with
$\eta_2=\bmm_2^T \bN^{-1}\bmm_1$, $\eta_1=-f(\bN)$ and $\eta_0 = \eta_1 \eta_2 + \bmm_2^T \bN^{-2}\bmm_1$. 
\end{proposition}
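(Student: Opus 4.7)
The plan is to mimic the argument used in Proposition~\ref{prop:f(X)}: substitute the fixed-point form $\bX = \bM + r\bN$ (with $r := f(\bX)$) into the scalar equation $r = \trace(\bX^{-1})$ and reduce it to a polynomial equation in $r$. The key difference is that $f$ is now nonlinear, so we must compute $\trace(\bX^{-1})$ explicitly using the rank-one structure of $\bM$.

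First I would note that if $\bX$ is nonsingular, then $r = \trace(\bX^{-1})$ is well defined, and $\bX = \bmm_1 \bmm_2^T + r \bN$. Assuming $r \neq 0$ (which will be seen to correspond to $\eta_0 \neq 0$), factor $\bX = r\bN(\bI + \tfrac{1}{r}\bN^{-1}\bmm_1 \bmm_2^T)$. The inner matrix is an identity-plus-rank-one perturbation, so the Sherman--Morrison formula gives
\begin{equation*}
\bX^{-1} = \frac{1}{r}\bN^{-1} - \frac{\bN^{-1}\bmm_1\bmm_2^T\bN^{-1}}{r(r + \bmm_2^T\bN^{-1}\bmm_1)} = \frac{1}{r}\bN^{-1} - \frac{\bN^{-1}\bmm_1\bmm_2^T\bN^{-1}}{r(r + \eta_2)},
\end{equation*}
provided $r + \eta_2 \neq 0$. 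Taking the trace and using its cyclic property gives
\begin{equation*}
\trace(\bX^{-1}) = \frac{\trace(\bN^{-1})}{r} - \frac{\bmm_2^T\bN^{-2}\bmm_1}{r(r+\eta_2)} = -\frac{\eta_1}{r} - \frac{\bmm_2^T\bN^{-2}\bmm_1}{r(r+\eta_2)},
\end{equation*}
since $f(\bN) = \trace(\bN^{-1}) = -\eta_1$.

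Second, I would impose the fixed-point condition $r = \trace(\bX^{-1})$ and clear denominators by multiplying through by $r(r+\eta_2)$. This yields
\begin{equation*}
r^2(r+\eta_2) + \eta_1(r+\eta_2) + \bmm_2^T\bN^{-2}\bmm_1 = 0,
\end{equation*}
which after expansion reads $r^3 + \eta_2 r^2 + \eta_1 r + \eta_0 = 0$ with $\eta_0 = \eta_1\eta_2 + \bmm_2^T\bN^{-2}\bmm_1$, exactly as stated. Conversely, any root $r$ of this cubic such that $\bX := \bM + r\bN$ is nonsingular satisfies $\trace(\bX^{-1}) = r$ by reversing the computation, so $\bX$ solves $\bX = \bM + f(\bX)\bN$.

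The main obstacle is not the algebra itself but justifying the manipulations in the degenerate cases. Specifically, one must argue that a nonsingular solution cannot have $r = 0$ (otherwise $\bX = \bM = \bmm_1\bmm_2^T$, which is rank one, contradicting invertibility for $n \geq 2$), and treat $r + \eta_2 = 0$ separately: in that case $\bI + \tfrac{1}{r}\bN^{-1}\bmm_1 \bmm_2^T$ is singular (its single nontrivial eigenvalue vanishes), so $\bX$ itself is singular, again contradicting the hypothesis. Thus Sherman--Morrison applies without loss of generality to every nonsingular solution, completing the proof.
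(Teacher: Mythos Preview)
Your argument is correct and follows essentially the same route as the paper: apply the Sherman--Morrison formula to $\bmm_1\bmm_2^T + r\bN$ (using the invertibility of $\bN$), take the trace, and clear denominators to obtain the cubic in $r$. Your treatment is in fact slightly more careful than the paper's, since you explicitly dispose of the degenerate cases $r=0$ and $r+\eta_2=0$ by observing that each forces $\bX$ to be singular, whereas the paper only mentions the first of these.
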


\begin{proof}
We first note that
if $\bX$ is a nonsingular solution to the given equation, then $f(\bX)\ne 0$ must hold,
otherwise $\bX = \bM$ would not be invertible. Using the
Sherman-Morrison formula we obtain
\begin{eqnarray*}
\bX^{-1} &=& (\bmm_1 \bmm_2^T + f(\bX) \bN)^{-1} \\
&=&
\frac 1 {f(\bX)} 
\left ( \bN^{-1} - \bN^{-1} \bmm_1 ( f(\bX)+ \bmm_2^T \bN^{-1} \bmm_1)^{-1} \bmm_2^T \bN^{-1}\right ).
\end{eqnarray*}
Using $f(\bX) = \trace( (\bmm_1 \bmm_2^T + f(\bX) \bN)^{-1})$, we obtain
$$
f(\bX) = \frac 1 {f(\bX)}
\left ( f(\bN) - ( f(\bX) + \bmm_2^T \bN^{-1}\bmm_1)^{-1} \bmm_2^T \bN^{-1}  \bN^{-1} \bmm_1 \right ).
$$
Reordering terms, the third degree polynomial in $r=f(\bX)$ is obtained.
\end{proof}

A similar result can be obtained by exchanging the role of $\bM$ and $\bN$, that is,
requiring that $\bN$ is rank-one and $\bM$ nonsingular, giving rise to at most
two distinct solutions.
More precisely, given the problem $\bX = \bM+f(\bX) \bn_1 \bn_2^T$, similar algebraic
steps show that the solutions
are given as $X_{(i)} = \bM+ r_i \bn_1 \bn_2^T$, where $r_i$ are the roots of
the polynomial 
$$
\eta_2 r^2 + \eta_1 r + \eta_0 = 0, 
$$
with $\eta_0 = -f(\bM)$,  $\eta_2=\bn_2^T \bM^{-1}\bn_1$ and $\eta_1=1+\eta_0 \eta_2 + \bn_2^T\bM^{-2}\bn_1$.

If $\bM$ or $\bN$ have larger rank, then the procedure described above cannot
be directly generalized.

\begin{example}\label{ex:rankone}
In Figure~\ref{fig:rankone}
we give a general test code in Matlab \cite{matlab7} for the case of $f(\bX)={\rm trace}(\bX^{-1})$
and the solution formulas in Proposition~\ref{prop:inv} and the subsequent discussion.
The left hand side implements the case of $\bM$ rank-one, while the right-hand side
refers to the case of $\bN$ rank-one. The obtained computational results are
\vskip 0.1in

{\tt [1.5543e-15   5.0626e-14   2.4425e-15]  \hskip 0.2in [8.3313e-16   8.3313e-16]}

\begin{figure}
{\scriptsize
\begin{verbatim}
f=@(X)(trace(inv(X)));
n=10; rng(2)                                           rng(1)
%X=m1*m2'+f(X) N;                                      %X=M+f(X) n1*n2';
m1=randn(n,1); m2=randn(n,1);                          n1=randn(n,1); n2=randn(n,1);
N=randn(n,n);                                          M=randn(n,n);
 
t2=m2'/N*m1;                                           f2=n2'/M*n1;
t1=-trace(inv(N));                                     f1=1-f(M)*f2+ n2'/M^2*n1;
t0=t1*t2+m2'/N^2*m1;                                   f0=-f(M);

r=roots([1 t2 t1 t0]);                                 r=roots([f2, f1, f0]);

X1=m1*m2'+r(1)*N;                                      X1=M+r(1)*n1*n2';
X2=m1*m2'+r(2)*N;                                      X2=M+r(2)*n1*n2';
X3=m1*m2'+r(3)*N;

[norm(f(X1)-r(1)) norm(f(X2)-r(2)) norm(f(X3)-r(3))]   [norm(f(X1)-r(1)), norm(f(X2)-r(2))]
\end{verbatim}
}
\caption{Matlab code for Example \ref{ex:rankone}.\label{fig:rankone}}
\end{figure}

\end{example}

\section{The nonlinear case. Linear-nonlinear composition}\label{sec:lin_nonlin}
The problem changes significantly in case the function $f$ has the general form
$$
f(\bX) = \phi(\psi(\bX)), \quad 
\phi: \RR^{n\times n} \to \RR, \quad \psi: \RR^{n\times n} \to \RR^{n\times n} ,
$$
where $\phi$ is linear,  and $\psi$ is a {(nonlinear)} matrix function \cite{Higham2008}.
This is the case for instance for $f(\bX)={\rm trace}(\exp(-\bX))$.
We focus on the small size  case, and use the form in (\ref{eqn:main1}) derived
from (\ref{eqn:main}) after the application of the inverse Sylvester operator.

Let us consider the case when $\phi(\bY) = {\rm trace}(\bY)$, and assume that $\bN$
is diagonalizable, so that  $\bN=\bQ\bLambda \bQ^{-1}$.  Then
(\ref{eqn:main1}) is equivalent to
$$
 \bQ^{-1}\XX \bQ =   \bQ^{-1}\bM \bQ + f(\XX) \bLambda .
$$
We then note that
$$
f(\XX) = {\rm trace}(\psi(\XX))= {\rm trace}(\psi(\bQ^{-1}\XX \bQ))=f(\bQ^{-1}\XX \bQ),
$$
as the trace is invariant under similarity transformations.
Let $\bX_1 \equiv \bQ^{-1}\XX \bQ$ and $\bM_1 = \bQ^{-1}\bM \bQ$, so that
\begin{eqnarray}\label{eqn:linmat}
 \bX_1  =\bM_1 +   f(\bX_1) \bLambda.
\end{eqnarray}
This form shows that the scalar value $f(\bX_1)$ only appears in the diagonal elements
of $\bX_1$, while the off-diagonal part of $\bX_1$ coincides with $\bM_1$. 
In spite of this simple relation, it is hard to determine expressions for the
solution in closed form for general matrix functions $\psi$, since 
computing ${\rm trace}(\psi(\bX_1))$
still involves the whole matrix $\bX_1$, and the nonlinearity of $\psi$ does not
allow for algebraic simplifications. We are thus led to consider classical
iterative schemes for solving (\ref{eqn:linmat}). 

Using the formulation in (\ref{eqn:linmat}),  starting with some $\bX_1^{(0)}$, a fixed point iteration can be written as
\begin{equation}\label{eqn:sequence}
 \bX_1^{(k+1)}  =\bM_1 +   f(\bX_1^{(k)}) \bLambda,
\end{equation}
for $k\ge0$, where, it is apparent that only the diagonal elements of $\bX_1^{(k+1)}$
are updated at each iteration $k$, while
the off-diagonal elements of $\bX_1$ still coincide with those of $\bM_1$ and
they never change through the iteration. Combining two consecutive iterations, we obtain
for $k\ge1$,
$$
 \bX_1^{(k+1)}  =\bX_1^{(k)} +   (f(\bX_1^{(k)})-f(\bX_1^{(k-1)})) \bLambda,
$$
which only updates the diagonal elements of the matrices. Hence, setting $\blambda={\rm diag}(\bLambda)$
where the function diag extracts the diagonal elements of a matrix, we can write
\begin{equation}\label{eqn:diag}
 {\rm diag}(\bX_1^{(k+1)})  ={\rm diag}(\bX_1^{(k)}) +   (f(\bX_1^{(k)})-f(\bX_1^{(k-1)})) \blambda .
\end{equation}

%
%

The final solution is  obtained as $\bX^{(k)} = \bQ \bX_1^{(k)} \bQ^{-1}$.
We observe that for $\bN$ non-symmetric, the conditioning of
$\bQ$ influences the error norm in the final 
approximate solution.  More precisely,
let $X_1^{\star}$ be the exact solution to (\ref{eqn:linmat}). Then
 $\|\bX^{(k)} -\bX^\star\|\le \|\bQ\|\,\|\bQ^{-1}\| 
\|\bX_1^{(k)} -\bX_1^\star\|$, so that
the final $\bX^{(k)}$ may be less accurate than the iteration in $\bX_1^{(k)}$ would grant.


We first report on an algebraic characterization of the iteration, and then focus on
error norm bounds
for a selection of well known matrix functions. To this end, we will focus on the form (\ref{eqn:linmat}), 
in which only the diagonal elements are modified by
the iteration, when taking $\bX_1^{(0)}=\bM_1$. The same occurs for the error matrix.

\begin{proposition}\label{prop:sign}
Let $\bX_1^{(0)}=\bM_1$ and $\{\bX_1^{(k)}\}_{k\ge 1}$ be the sequence of iterates from 
(\ref{eqn:linmat}), with $\bM_1\succ 0$ and $\bLambda\succ 0$.

i) If $f$ is a nonnegative function satisfying $f(\bX)-f(\bY)\le 0$ for $\bY-\bX\succeq  0$,
then $\bX_1^{(k+1)}\succeq \bX_1^{(k)}$ for all $k$s.

ii) If $f$ is a nonnegative function satisfying $f(\bX)-f(\bY)\ge 0$ for $\bY-\bX\succeq  0$,
then the  iterates $\bX_1^{(k+1)}-\bX_1^{(k)}$ alternate definiteness at each $k$.
\end{proposition}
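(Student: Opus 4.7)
The plan is to work directly from the recursion (\ref{eqn:linmat}) and exploit the fact that consecutive iterates differ by a \emph{scalar} multiple of the fixed matrix $\bLambda$. Specifically, subtracting the relation at step $k-1$ from the one at step $k$ gives, for $k\ge 1$,
$$
\bX_1^{(k+1)} - \bX_1^{(k)} \;=\; \bigl(f(\bX_1^{(k)}) - f(\bX_1^{(k-1)})\bigr)\,\bLambda,
$$
while the base case, obtained directly from the definition of $\bX_1^{(1)}$ with $\bX_1^{(0)} = \bM_1$, reads $\bX_1^{(1)} - \bX_1^{(0)} = f(\bM_1)\,\bLambda$. Since $\bLambda \succ 0$, the definiteness of the difference at each step is controlled by the sign of a single scalar, and from this point on the argument is essentially reduced to tracking how the monotonicity hypothesis on $f$ propagates the sign.

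For part (i), I would proceed by induction on $k$. The base case $\bX_1^{(1)} \succeq \bX_1^{(0)}$ follows from the nonnegativity of $f$ evaluated on $\bM_1\succ 0$, combined with $\bLambda \succ 0$. For the inductive step, suppose $\bX_1^{(k)} - \bX_1^{(k-1)} \succeq 0$; then the hypothesis in (i) (which says $f$ is monotone non-decreasing in the Loewner order) gives $f(\bX_1^{(k)}) - f(\bX_1^{(k-1)}) \ge 0$, and inserting this scalar into the displayed recursion yields $\bX_1^{(k+1)} - \bX_1^{(k)} \succeq 0$, closing the induction.

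For part (ii), the argument is analogous but the monotonicity of $f$ is reversed: if $\bY - \bX \succeq 0$ then $f(\bX) - f(\bY) \ge 0$. Again the base case gives $\bX_1^{(1)} - \bX_1^{(0)} = f(\bM_1)\,\bLambda \succeq 0$. At $k=1$, since $\bX_1^{(1)} \succeq \bX_1^{(0)}$, the reversed monotonicity produces $f(\bX_1^{(1)}) - f(\bX_1^{(0)}) \le 0$, so $\bX_1^{(2)} - \bX_1^{(1)} \preceq 0$. The inductive pattern then alternates: whenever $\bX_1^{(k)} - \bX_1^{(k-1)}$ is semidefinite of one sign, the reversed monotonicity forces the scalar $f(\bX_1^{(k)}) - f(\bX_1^{(k-1)})$ to have the opposite sign, and multiplication by $\bLambda \succ 0$ preserves this sign in the next difference.

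There is no substantive obstacle in this argument; the only point worth stating with care is that the role of $\bLambda \succ 0$ is purely as a rank-one-like ``sign carrier'' for the difference matrices, and that the nonnegativity of $f$ is needed only to initiate the induction (to fix the sign of the very first difference $\bX_1^{(1)} - \bX_1^{(0)}$). After that, the monotonicity assumption alone drives the respective conclusions.
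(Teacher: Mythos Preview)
Your proposal is correct and follows essentially the same approach as the paper: both arguments establish the base case $\bX_1^{(1)}-\bX_1^{(0)}=f(\bM_1)\bLambda\succeq 0$ from the nonnegativity of $f$, then subtract consecutive instances of (\ref{eqn:linmat}) to obtain $\bX_1^{(k+1)}-\bX_1^{(k)}=(f(\bX_1^{(k)})-f(\bX_1^{(k-1)}))\bLambda$ and use the monotonicity assumption on $f$ to propagate (or alternate) the sign. Your write-up is in fact somewhat more explicit about the induction and correctly isolates that nonnegativity of $f$ is needed only to anchor the base case.
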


\begin{proof}
For $k=0$, $\bX_1^{(1)} = \bM_1 + f(\bM_1)\bLambda \succeq \bM_1=\bX_1^{(0)}$.
For the subsequent iterates we have
$\bX_1^{(k+1)} = \bM_1 + f(\bX_1^{(k)})\bLambda$ and
$\bX_1^{(k)} = \bM_1 + f(\bX_1^{(k-1)})\bLambda$. 
Subtracting we obtain
$$
\bX_1^{(k+1)}-\bX_1^{(k)} = (f(\bX_1^{(k)}) - f(\bX_1^{(k-1)}) ) \bLambda.
$$
Hence, for any $k>1$, 
if $f(\bX_1^{(k)}) - f(\bX_1^{(k-1)})>0$ then $\bX_1^{(k+1)}-\bX_1^{(k)}\succeq 0$ 
because $\bLambda \succeq 0$; this shows (i).
For (ii),
if $\bX_1^{(k)} - \bX_1^{(k-1)} \succeq 0$ then $(f(\bX_1^{(k)}) - f(\bX_1^{(k-1)}) ) <0$ so that
$\bX_1^{(k+1)}-\bX_1^{(k)} \preceq 0$, and viceversa.
\end{proof}

Recalling relation (\ref{eqn:diag}), the definiteness explored in Proposition~\ref{prop:sign}
refers to the way the diagonal elements of the iteration matrix change.
In the first case, these elements grow monotonically as the iterations proceed; in case of
convergence, the diagonal elements reach the final value from below.
On the other hand, if $f$ grows monotonically, the diagonal entries may
showcase an alternating leapfrog behavior, which in case of convergence will
terminate with the exact solution.

For instance, the function $\trace(\bX^{1/2})$ satisfies (i) while
the function $\trace(\exp(-\bX))$ satisfies (ii). The different behavior is reported in
Figure~\ref{fig:signs} for the iteration in (\ref{eqn:diag}), with the data created below in 
Matlab \cite{matlab7}.
The values correspond to the $(n/2,n/2)$ diagonal element, however all diagonal elements
behave similarly, as they change by the same factor.

\vskip 0.1in
{\footnotesize
\begin{verbatim}
n=10; rng(1)
N=randn(n,n); N=sqrtm(N'*N);                             N=rand(n,n); N=0.2*sqrtm(N'*N);
f=@(X)(trace(expm(-X)));                                 f=@(X)(trace(sqrtm(X)))
Xstar=2*n*randn(n,n); Xstar=sqrtm(Xstar'*Xstar);
M=Xstar-f(Xstar)*N;
\end{verbatim}
}
\vskip 0.1in

\begin{figure}[tbhp]
 \centering
\includegraphics[height=.35\textwidth,width=0.48\textwidth]{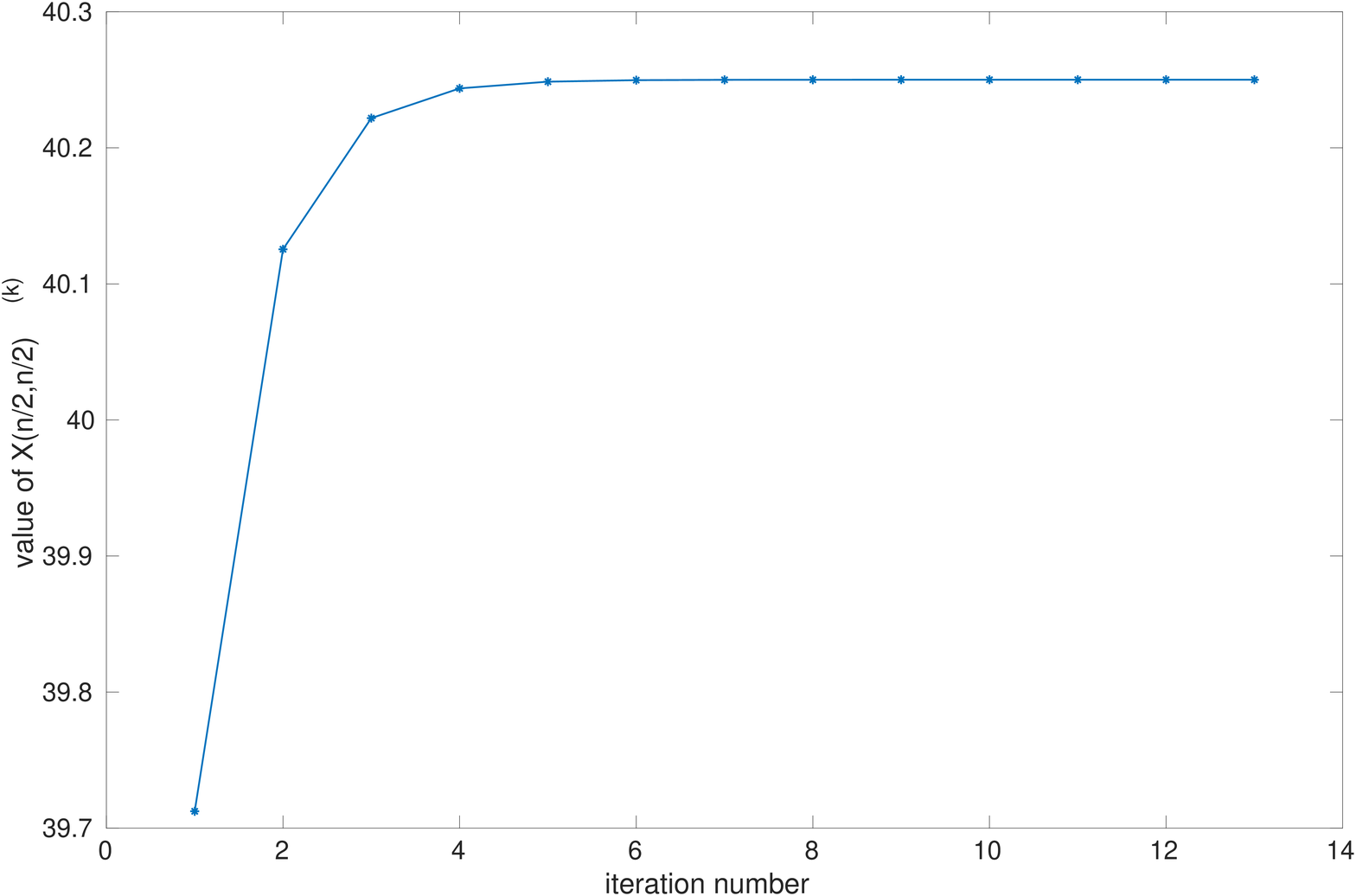}
\includegraphics[height=.35\textwidth,width=0.48\textwidth]{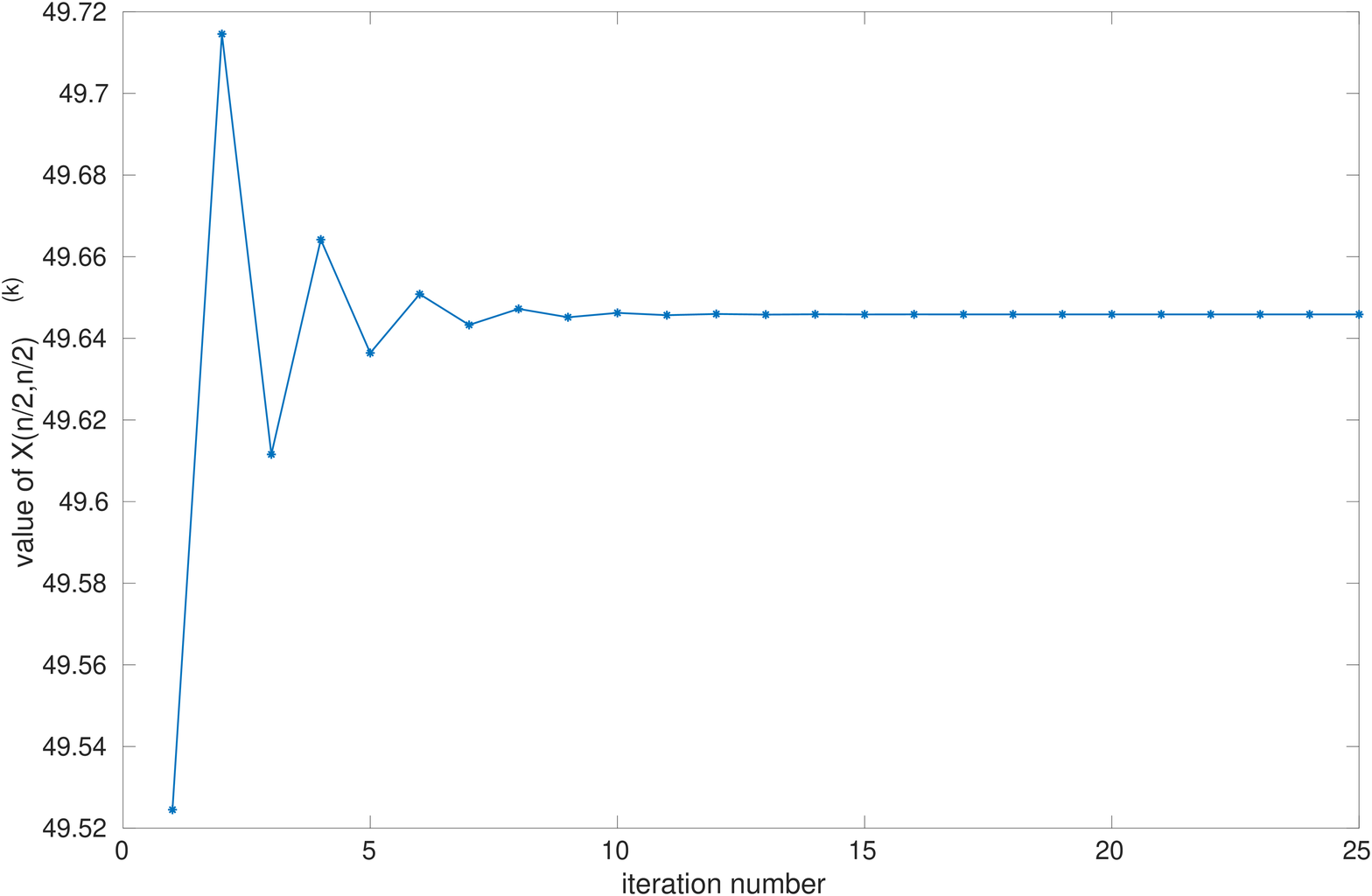}
  \caption{Convergence of the $(n/2, n/2)$ diagonal element of $\bX_1^{(k)}$.
Left: $f(\bX)=\trace(\bX^{1/2})$.  Right: $f(\bX)=\trace(\exp(-\bX))$.  \label{fig:signs}}
\end{figure}

As a first consideration for the convergence analysis of the iteration $\{\bX^{(k)}\}_{k\ge 0}$, 
we first notice that
\begin{equation}\label{eqn:error_eta}
 \bE^{(k+1)}\equiv \bX^{(k+1)}_{1}-\bX_1^\star = (f(\bX^{(k)}_{1})-f(\bX_1^\star) ) \bLambda\equiv \eta_k \bLambda,
\end{equation}
showing that the error is a scalar multiple of a constant, diagonal matrix, and
only the scalar $\eta_k$ changes with $k$. Moreover, for any matrix norm $\|\cdot \|$,
\begin{eqnarray*}
 \|\bX_1^{(k+1)} -\bX_1^{\star}\|  &= &
| (f(\bX_1^{\star})-f(\bX_1^{(k)}))|\,  \|\bLambda\|  \\
&=&
\left (\frac{| (f(\bX_1^\star)-f(\bX_1^{(k)}))|}{\|\bX_1^{(k)} -\bX_1^\star\|}
\, \|\bLambda\| \right )
 \|\bX_1^{(k)} -\bX_1^\star\|. 
\end{eqnarray*}
The quantity in parenthesis is what we expect from the scalar case.
Due to the linearity of the trace, the quotient in parenthesis is closely related
to the differential of the considered matrix function $\psi$.
To make more precise statements about convergence we thus need to focus on specific examples of $f$.
We will make use of the
Frechet derivative of a matrix function $\psi$, defined as the linear function $L(\bX, \bE)$ such that
we have $\psi(\bX+\bE)-\psi(\bX)=L(\bX,\bE)+o(\|\bE\|)$ for $\|\bE\|$ sufficiently
small \cite[section 3.1]{Higham2008}.
If the Frechet derivative of the given function $\psi$ in $\bX$ exists, then $\psi$ is said to
be Frechet differentiable at $\bX$.

\begin{theorem}\label{th:errorbound}
Let $f(\bX)={\rm trace}(\exp(-\bX))$, and let 
$\{\bX_1^{(k)}\}_{k\ge 0}$ be the sequence of 
iterates from (\ref{eqn:sequence}), with $\bM_1 \succeq 0$ and $\bLambda\succeq 0$.
If $\bX_1^\star$ is a solution to (\ref{eqn:linmat}) and
we let $\bE^{(k)}=\bX_1^{(k)}-\bX_1^\star$, then 
for $\bX_1^{(k)}$ sufficiently close to $\bX_1^\star$ we have
$$
\bE^{(k+1)}  = {\rm trace}(\bLambda \exp(-\bX_1^\star)) \bE^{(k)} + o(\|\bE^{(k)}\|)\bLambda .
$$
\end{theorem}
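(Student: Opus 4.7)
The plan is to linearize $f$ around $\bX_1^\star$ via the Frechet derivative of the matrix exponential and to exploit the structural fact, recorded in (\ref{eqn:error_eta}), that every iteration error is a scalar multiple of $\bLambda$. Since (\ref{eqn:error_eta}) gives $\bE^{(k+1)} = \eta_k\bLambda$ with $\eta_k = f(\bX_1^{(k)}) - f(\bX_1^\star)$, the whole task reduces to a first-order expansion of the scalar $\eta_k$ in the matrix perturbation $\bE^{(k)}$.

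First I would invoke the standard integral representation $L_{\exp}(\bA,\bE)=\int_0^1 e^{s\bA}\bE\, e^{(1-s)\bA}\,ds$ of the Frechet derivative of the exponential \cite{Higham2008}, applied with $\bA=-\bX_1^\star$ and increment $-\bE^{(k)}$, to write $\exp(-\bX_1^{(k)}) - \exp(-\bX_1^\star) = -\int_0^1 e^{-s\bX_1^\star}\bE^{(k)} e^{-(1-s)\bX_1^\star}\,ds + o(\|\bE^{(k)}\|)$. Taking traces, using cyclicity to rewrite each integrand as ${\rm trace}(\bE^{(k)} e^{-(1-s)\bX_1^\star}e^{-s\bX_1^\star})$, and collapsing the product of the two commuting exponentials of $-\bX_1^\star$ to $e^{-\bX_1^\star}$, the dependence on $s$ disappears and the integral evaluates trivially. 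This gives the scalar expansion $\eta_k = -{\rm trace}(\bE^{(k)}\, e^{-\bX_1^\star}) + o(\|\bE^{(k)}\|)$.

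Second, I would use (\ref{eqn:error_eta}) one step back, namely $\bE^{(k)} = \eta_{k-1}\bLambda$ for $k\ge 1$, to promote this scalar identity to the matrix form of the theorem: by linearity of the trace, ${\rm trace}(\bE^{(k)} e^{-\bX_1^\star})\,\bLambda = \eta_{k-1}\,{\rm trace}(\bLambda e^{-\bX_1^\star})\,\bLambda = {\rm trace}(\bLambda e^{-\bX_1^\star})\,\bE^{(k)}$, so multiplying the scalar expansion for $\eta_k$ by $\bLambda$ delivers the announced recursion (the leading coefficient inheriting a minus sign from the negative exponent inside $f$, which is consistent with the alternating behaviour of Proposition~\ref{prop:sign}(ii)).

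The delicate point is precisely this last step: without the a priori knowledge that $\bE^{(k)}$ is aligned with $\bLambda$, the expression ${\rm trace}(\bE^{(k)} e^{-\bX_1^\star})\,\bLambda$ would not reduce to a scalar multiple of $\bE^{(k)}$, so the proof genuinely rests on the structural observation of (\ref{eqn:error_eta}). Controlling the $o(\|\bE^{(k)}\|)$ remainder after multiplication by the fixed matrix $\bLambda$ is then routine, using boundedness of the Frechet derivative of $\exp$ in a small neighbourhood of $-\bX_1^\star$.
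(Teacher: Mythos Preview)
Your approach is essentially identical to the paper's: both linearize via the integral representation of the Fr\'echet derivative of the exponential, collapse the trace of the integrand using cyclicity so that the $s$--dependence vanishes, and then invoke (\ref{eqn:error_eta}) to convert ${\rm trace}(\bE^{(k)} e^{-\bX_1^\star})\,\bLambda$ into ${\rm trace}(\bLambda\, e^{-\bX_1^\star})\,\bE^{(k)}$. Your remark about the minus sign is in fact a correction to the paper, whose proof silently drops the chain-rule sign coming from differentiating $\exp(-\bX)$; the leading coefficient should indeed be $-{\rm trace}(\bLambda\, e^{-\bX_1^\star})$, which is what makes the recursion consistent with the alternating behaviour of Proposition~\ref{prop:sign}(ii) (the subsequent convergence bound in Theorem~\ref{th:errorbound2} is unaffected, since only $|\sigma|<1$ matters there).
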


\begin{proof}
The matrix exponential is Frechet differentiable at any $\bX$, and its Frechet derivative
is given by 
$L(\bX,\bE) = \int_0^1 \exp(-\bX(1-s))\bE \exp(-\bX s) ds$ \cite[formula (10.15)]{Higham2008}.

The differential of $f$ thus corresponds to ${\rm trace}(L(\bX,\bE))$, which can be written as
\begin{eqnarray*}
{\rm trace}(L(\bX,\bE)) &=& \int_0^1 {\rm trace}( \exp(-\bX(1-s))\bE \exp(-\bX s)) ds \\
&=&
\int_0^1 {\rm trace}(\bE \exp(-\bX s)\exp(-\bX(1-s)) ) ds \\
&=&
\int_0^1 {\rm trace}(\bE \exp(-\bX)) ds = {\rm trace}(\bE \exp(-\bX)).
\end{eqnarray*}

Hence, for  $\bX_1^{(k)}$ 
sufficiently close to $\bX_1^\star$, using (\ref{eqn:error_eta}) we have
\begin{eqnarray*} \label{eqn:errorsequence}
\bX_1^{(k+1)}-\bX_1^\star &=& (f(\bX_1^{(k)})-f(\bX_1^\star) ) \bLambda \\
&=& {\rm trace}(L(\bX_1^\star,\bE^{(k)})) \bLambda + o(\|\bE^{(k)}\|) \bLambda \\
&=&  {\rm trace}(\bE^{(k)} \exp(-\bX_1^\star)) \bLambda + o(\|\bE^{(k)}\|) \bLambda \\
&=&  {\rm trace}(\bLambda \exp(-\bX_1^\star)) \eta_{k-1}\bLambda + o(\|\bE^{(k)}\|) \bLambda \\
&=&  {\rm trace}(\bLambda \exp(-\bX_1^\star)) \bE^{(k)} + o(\|\bE^{(k)}\|) \bLambda,
\end{eqnarray*}
and the proof is complete.
\end{proof}

The above expression for the error allows us to give a sufficient condition for convergence.
The proof follows the usual steps of the Ostrowski Theorem; see, e.g., \cite[10.1.3]{Ortega.Rheinboldt.00}.

\begin{theorem}\label{th:errorbound2}
Assume that the notation and hypotheses of Theorem \ref{th:errorbound} hold.
Suppose that $\psi$ is Frechet differentiable at $\bX_1^\star$. If
${\rm trace}(\bLambda \exp(-\bX_1^\star))=\sigma <1$ then there exist 
an $\bX_1^{(0)}$ and a  $\sigma_1 \in [0,1)$ such that
$$
\|\bE^{(k+1)}\| \le \sigma_1 \|\bE^{(k)}\| ,
$$
for $k\ge0$, for any matrix norm $\|\cdot \|$.
\end{theorem}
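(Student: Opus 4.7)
The plan is to mimic the standard Ostrowski argument for local convergence of a fixed-point iteration, using the error expression already obtained in Theorem~\ref{th:errorbound} as the substitute for the usual ``derivative of the iteration map has spectral radius less than one'' hypothesis.

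First I would take norms in the identity
$$
\bE^{(k+1)} = {\rm trace}(\bLambda \exp(-\bX_1^\star))\, \bE^{(k)} + o(\|\bE^{(k)}\|)\,\bLambda
$$
from Theorem~\ref{th:errorbound}, which yields
$$
\|\bE^{(k+1)}\| \;\le\; |\sigma|\,\|\bE^{(k)}\| + \|\bLambda\|\cdot o(\|\bE^{(k)}\|).
$$
By assumption $|\sigma|<1$, so I can fix $\sigma_1\in(|\sigma|,1)$ and set $\varepsilon := \sigma_1-|\sigma|>0$. By the definition of the little-$o$ symbol (which, via the linearity of the trace and the Frechet differentiability of $\psi$ at $\bX_1^\star$, is inherited from the remainder in $\psi(\bX_1^\star+\bE)-\psi(\bX_1^\star) = L(\bX_1^\star,\bE) + o(\|\bE\|)$), there exists $\delta>0$ such that $\|\bLambda\|\cdot o(t)/t \le \varepsilon$ whenever $0<t\le\delta$.

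Next I would choose the starting matrix $\bX_1^{(0)}$ so that $\|\bE^{(0)}\|\le\delta$ and run an induction on $k$: assuming $\|\bE^{(k)}\|\le\delta$, the inequality above gives $\|\bE^{(k+1)}\| \le (|\sigma|+\varepsilon)\|\bE^{(k)}\| = \sigma_1\|\bE^{(k)}\|\le\sigma_1\delta\le\delta$, which both proves the desired contraction estimate and preserves the inductive hypothesis for the next step. This delivers $\|\bE^{(k+1)}\|\le\sigma_1\|\bE^{(k)}\|$ for every $k\ge 0$.

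The main obstacle, and the only place where care is really required, is the last induction: the bound on the remainder term is only a local estimate near $\bX_1^\star$, so one has to verify that the iterates do not drift out of the chosen neighborhood. Fortunately the contraction factor $\sigma_1<1$ makes this automatic, exactly as in the classical Ostrowski theorem \cite[10.1.3]{Ortega.Rheinboldt.00}, so essentially the whole proof reduces to packaging the error identity of Theorem~\ref{th:errorbound} into that standard framework.
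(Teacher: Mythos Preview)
Your proposal is correct and follows essentially the same Ostrowski-type argument as the paper: take norms in the error identity of Theorem~\ref{th:errorbound}, choose $\varepsilon$ so that $\sigma_1=|\sigma|+\varepsilon<1$, and induct. If anything, you are slightly more explicit than the paper in verifying that $\sigma_1<1$ keeps all iterates inside the $\delta$-neighborhood, so the little-$o$ estimate remains applicable at every step.
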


\begin{proof}
We proceed by induction.
The differentiability of $\psi$ and Theorem~\ref{th:errorbound}
ensure that for an arbitrary  $\epsilon >0$ there exists a $\bX_1^{(0)}$ sufficiently close to $\bX^\star$
such that
$$
||\bE^{(1)}  - {\rm trace}(\bLambda \exp(-\bX_1^\star)) \bE^{(0)}\| \le \epsilon \|\bE^{(0)}\|,
$$
so that
\begin{eqnarray*}
||\bE^{(1)}\| &\le & ||\bE^{(1)}  - {\rm trace}(\bLambda \exp(-\bX_1^\star)) \bE^{(0)}\| +
  |{\rm trace}(\bLambda \exp(-\bX_1^\star))|\, \|\bE^{(0)}\| \\
& \le & (\epsilon + \sigma) \|\bE^{(0)}\|.
\end{eqnarray*}
By taking $\epsilon$ so that $\sigma_1 = \epsilon + \sigma <1$ the result follows for $k+1=1$.
Assuming now that the result holds for $\|\bE^{(k)}\|$, we can write again
$$
||\bE^{(k+1)}  - {\rm trace}(\bLambda \exp(-\bX_1^\star)) \bE^{(k)}\| \le \epsilon \|\bE^{(k)}\|,
$$
and proceed as for $k=0$, to obtain the final bound.
\end{proof}


\begin{example}\label{ex:fixed_ex1}
We analyze the convergence of the fixed point iteration with respect to the 
condition of Theorem~\ref{th:errorbound2} on the derivative
${\rm trace}(\bLambda \exp(-\bX_1^\star))$. 
To this end, we consider $f(\bX)={\rm trace}(\exp(-\bX))$ and the matrix $\bX^\star = \sqrt{\alpha} \bG$ with 
$\bG = (\bG_0^T \bG_0)^{\frac 1 2}$, where
$\bG_0$ ={\tt randn(n,n)} (Matlab seed {\tt rng(1)}).
By varying  $\alpha$ a different magnitude of the Frechet derivative can be obtained.
The matrix $\bN$ is defined in the same way as $\bG$, and $\bM=\bX^\star - f(\bX^\star)\bN$,
In Table~\ref{tab:ex1} we report the
results of the fixed point iteration $\bX_1^{(0)}=\bM_1$, $\bX_1^{(k+1)} = \bM_1 + f(\bX_1^{(k)}) \bLambda$, $k=0, 1, \ldots$.
The iteration stops either for $\|\bX^{(k+1)}- (\bM + f(\bX^{(k+1)}) \bN)\|/\|\bM\| < 10^{-7}$ or
for $k=500$. The numbers in the table show lack of convergence as soon as
the condition on the derivative fails, as is typical of Ostrowski type theorems.
\end{example}

\begin{table}
\centering
\begin{tabular}{ccrc}
  ${\rm trace}(\bLambda \exp(-\bX_1^\star))$  & $\alpha$ & $k$ &  $\frac{\|\bX^{(k+1)}- (\bM + f(\bX^{(k+1)}) \bN)\|}{\|\bM\|}$ \\
\hline
   0.079 & 12.589 & 3  &8.3190e-08   \\
   0.176 & 10.000 & 6  &3.4123e-08   \\
   0.335 & 7.9433 & 11 & 3.7944e-08   \\
   0.570 & 6.3096 & 23 & 6.9902e-08   \\
   0.889 & 5.0119 & 117&  9.6324e-08   \\
   1.296 & 3.9811 & 500&  3.5943e-01   \\
   1.789 & 3.1623 & 500&  1.2832e+00   \\
\hline
\end{tabular}
\caption{Example \ref{ex:fixed_ex1}.
$f(\bX)={\rm trace}(\exp(-\bX))$. Behavior of the iteration (\ref{eqn:diag}) as $\alpha$ varies. \label{tab:ex1}}
\end{table}

We next derive similar results for a nonlinear function involving the matrix square root.

\begin{theorem}
Let $f(\bX)={\rm trace}(\bX^{\frac 1 2})$, and let 
$\{\bX_1^{(k)}\}_{k\ge 0}$ be the sequence of iterates 
from (\ref{eqn:linmat}), with $\bM_1\succeq 0$ and $\bLambda\succeq 0$,
so that the exact solution $\bX_1^\star$ to (\ref{eqn:linmat})
is symmetric and positive definite.
Let $\bE^{(k)}=\bX_1^{(k)}-\bX_1^\star$.
Then
$$
\bE^{(k+1)}  = \frac 1 2 {\rm trace}(\bLambda (\bX_1^\star)^{-1}) \bE^{(k)} + o(\|\bE^{(k)}\|)\bLambda.
$$
If $\frac 1 2 {\rm trace}(\bLambda (\bX_1^\star)^{-1})=\sigma<1$ then 
there exist an $\bX_1^{(0)}$ and a
$\sigma_1 \in [0,1)$ such that
$$
\|\bE^{(k+1)}\| \le \sigma_1 \|\bE^{(k)}\|  ,
$$
for any matrix norm $\|\cdot \|$.
\end{theorem}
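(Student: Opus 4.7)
The plan is to mirror the proofs of Theorems~\ref{th:errorbound} and~\ref{th:errorbound2}, with the Fr\'echet derivative of $\psi(\bX)=\bX^{1/2}$ taking the place of that of $\exp(-\bX)$. Since $\bM_1\succeq 0$ and $\bLambda\succeq 0$, the fixed point $\bX_1^\star=\bM_1+f(\bX_1^\star)\bLambda$ is symmetric positive definite, so the principal matrix square root is analytic---and in particular Fr\'echet differentiable---in a neighborhood of $\bX_1^\star$. Its Fr\'echet derivative $L(\bX,\bE)$ is the unique solution of the Sylvester equation
\begin{equation*}
\bX^{1/2}L(\bX,\bE)+L(\bX,\bE)\bX^{1/2}=\bE,
\end{equation*}
see \cite[Ch.~6]{Higham2008}.

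First I would reduce $\trace(L(\bX,\bE))$ to an explicit scalar: premultiplying the Sylvester equation above by $\bX^{-1/2}$, taking traces, and using the cyclic property of the trace yields $2\,\trace(L(\bX,\bE))=\trace(\bX^{-1/2}\bE)$, hence $\trace(L(\bX,\bE))=\frac{1}{2}\trace(\bX^{-1/2}\bE)$. Combining this with Fr\'echet differentiability and the chain of equalities at the end of the proof of Theorem~\ref{th:errorbound}---expand $\bE^{(k+1)}=(f(\bX_1^{(k)})-f(\bX_1^\star))\bLambda$ via the Fr\'echet derivative and then substitute $\bE^{(k)}=\eta_{k-1}\bLambda$ from (\ref{eqn:error_eta}), exploiting linearity to move $\eta_{k-1}$ outside the trace---immediately produces the claimed linear expansion of $\bE^{(k+1)}$ in $\bE^{(k)}$ up to a $o(\|\bE^{(k)}\|)\bLambda$ remainder.

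For the contractive estimate I would then repeat verbatim the Ostrowski-style induction in the proof of Theorem~\ref{th:errorbound2}. Fix $\epsilon>0$ with $\sigma_1:=\sigma+\epsilon<1$; choose $\bX_1^{(0)}$ so close to $\bX_1^\star$ that the remainder satisfies $\|o(\|\bE^{(0)}\|)\bLambda\|\le\epsilon\|\bE^{(0)}\|$; triangulate with the $\sigma\|\bE^{(0)}\|$ term to obtain $\|\bE^{(1)}\|\le\sigma_1\|\bE^{(0)}\|$; and propagate the estimate inductively, noting that each successive error is no larger than the previous one, so the iterates remain in the neighborhood where the previous step is valid.

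The main obstacle is to ensure that Fr\'echet differentiability applies uniformly along the iteration: every iterate $\bX_1^{(k)}$ must lie in a neighborhood of $\bX_1^\star$ on which $\bX^{1/2}$ is analytic (equivalently, on which $\bX_1^{(k)}$ has no eigenvalues on the closed non-positive real axis). Positive definiteness of $\bX_1^\star$ supplies such a neighborhood, and the inductive contraction keeps the iterates inside it once $\bX_1^{(0)}$ is sufficiently close, so this concern can be absorbed into the same smallness requirement already needed for the $o(\cdot)$ remainder estimate.
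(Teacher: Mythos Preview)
Your approach is essentially identical to the paper's: both invoke the Fr\'echet derivative of the matrix square root, reduce its trace via the Sylvester-equation trick (the paper cites Remark~\ref{rem:solve} for this, which is exactly your premultiply-and-trace computation), substitute $\bE^{(k)}=\eta_{k-1}\bLambda$ from (\ref{eqn:error_eta}), and then defer to the Ostrowski-style induction of Theorem~\ref{th:errorbound2} for the contraction bound.

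One discrepancy is worth flagging. Your computation correctly yields $\trace(L(\bX,\bE))=\tfrac{1}{2}\trace(\bX^{-1/2}\bE)$, since the Fr\'echet derivative of $\bX^{1/2}$ solves $\bX^{1/2}L+L\bX^{1/2}=\bE$. The theorem statement and the paper's proof, however, carry $(\bX_1^\star)^{-1}$ rather than $(\bX_1^\star)^{-1/2}$: the paper writes the Fr\'echet derivative as ${\cal L}_{\bX}^{-1}(\bE)$ with ${\cal L}_{\bX}:\bZ\mapsto\bX\bZ+\bZ\bX$, apparently conflating $\bX$ with $\bX^{1/2}$, and then applies the trace formula with $\bA=\bX_1^\star$ instead of $\bA=(\bX_1^\star)^{1/2}$. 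Your version is the mathematically correct one (the scalar check $\psi(x)=x^{1/2}$, $\psi'(x)=\tfrac{1}{2}x^{-1/2}$ already confirms it), so your argument in fact proves the theorem with the exponent corrected to $-1/2$; as written it does not establish the statement with $(\bX_1^\star)^{-1}$, but that is a defect of the statement, not of your proof.
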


\begin{proof}
For the matrix square root we have
$L(\bX,\bE) = {\cal L}_{\bX}^{-1}(\bE)$, where ${\cal L}_{\bX}$ is the linear operator
${\cal L}_{\bX}  : \bZ \mapsto \bX \bZ + \bZ \bX$ \cite[p.134]{Higham2008}. 
Using Remark~\ref{rem:solve} and (\ref{eqn:error_eta}) we can write
\begin{eqnarray*}
\bX_1^{(k+1)}-\bX_1^\star  &=& 
{\rm trace}({\cal L}_{\bX_1^\star}^{-1}(\bE^{(k)}))\bLambda + o (\|\bE^{(k)}\|) \bLambda \\
&=& \frac 1 2 {\rm trace}((\bX_1^\star)^{-1}\bE^{(k)})\bLambda + o (\|\bE^{(k)}\|) \bLambda \\
&=& \frac 1 2 {\rm trace}((\bX_1^\star)^{-1}\bLambda) \bE^{(k)} + o (\|\bE^{(k)}\|) \bLambda,
\end{eqnarray*}
and the first result follows. 
The proof of the final bound follows the same lines as the corresponding
bound in Theorem~\ref{th:errorbound2}.
\end{proof}


\section{The nonlinear case. Nonlinear-linear composition}\label{sec:nonlin_lin}
The procedure described in Proposition~\ref{prop:f(X)} can be employed
within a procedure for solving (\ref{eqn:main}) when the nonlinear function has the form
$f(\bX) = g(h(\bX))$ where $g : [\alpha,\beta] \to \RR$ and
$h$ is a real valued {\it linear} function with image in $[\alpha,\beta]$.
The problem becomes nonlinear in $\bX$, hence
uniqueness of the solution is in general not guaranteed.

To analyze the new setting, consider again equation (\ref{eqn:main1}), that is
$\bX = \bM + f(\bX) \bN$,
and apply the linear function $h$ to both sides,
\begin{equation}\label{eqn:phi}
h(\bX) = h(\bM) + f(\bX)h(\bN).
\end{equation}
For $\gamma_1\equiv h(\bM)$, $\gamma_2\equiv h(\bN)$ and
setting $y \equiv h(\bX)$, the equation above corresponds to the nonlinear
scalar equation
\begin{equation}\label{eqn:f_nonlin}
 \gamma_1 + g(y)\gamma_2 - y = 0, \qquad y \in [\alpha,\beta] .
\end{equation}
We next formalize the fact that
if this equation has a solution $y^*$ in the considered interval, then (\ref{eqn:f_nonlin}) yields 
a solution to (\ref{eqn:main1}). To make the treatment simpler, we assume that
$h(\bX)={\rm trace}(\bX)$.  The general case
$h(\bX)={\rm trace}(\bH\bX)$ will also depend on the spectral and structural properties
of the matrix $\bH$.

\begin{proposition}
With the previous notation, assume that $y^*$ is a solution to (\ref{eqn:f_nonlin})
in $[\alpha,\beta]$. Then $\bX \equiv \bM + g(y^*) \bN$ is a
solution to (\ref{eqn:main}) with $f=g\circ h$. If $y^*$ is unique, then 
$\bX$ is also the unique solution to (\ref{eqn:main}).
\end{proposition}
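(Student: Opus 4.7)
The plan is to verify directly that the candidate matrix $\bX = \bM + g(y^*)\bN$ satisfies (\ref{eqn:main}), and then to show that when $y^*$ is unique every solution of (\ref{eqn:main}) must coincide with this $\bX$.

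First I would check that $h(\bX) = y^*$. Since $h = \trace$ is linear and $\bX = \bM + g(y^*)\bN$, we have $h(\bX) = h(\bM) + g(y^*) h(\bN) = \gamma_1 + g(y^*)\gamma_2$, which equals $y^*$ precisely because $y^*$ solves (\ref{eqn:f_nonlin}). Consequently $f(\bX) = g(h(\bX)) = g(y^*)$, so $\bX = \bM + f(\bX)\bN$, which is the reformulation (\ref{eqn:main1}). Applying ${\cal L}$ to both sides and using ${\cal L}(\bM) = \bD$ and ${\cal L}(\bN) = -\bC$ then yields
\[
\bA\bX + \bX\bB + f(\bX)\bC = \bD - f(\bX)\bC + f(\bX)\bC = \bD,
\]
so $\bX$ solves (\ref{eqn:main}). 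This is the same algebraic step already carried out at the end of the proof of Proposition~\ref{prop:f(X)}, the only difference being that the scalar multiplier is now $g(y^*)$ instead of $\sigma = f(\bM)/(1-f(\bN))$.

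For the uniqueness claim, I would run the argument in reverse. Let $\widehat{\bX}$ be any solution to (\ref{eqn:main}). Setting $\hat y = h(\widehat{\bX})$ and applying $h$ to the equivalent equation $\widehat{\bX} = \bM + f(\widehat{\bX})\bN = \bM + g(\hat y)\bN$, the linearity of the trace gives $\hat y = \gamma_1 + g(\hat y)\gamma_2$, so $\hat y \in [\alpha,\beta]$ is a solution of the scalar equation (\ref{eqn:f_nonlin}). If $y^*$ is the unique such solution, then $\hat y = y^*$, hence $f(\widehat{\bX}) = g(y^*)$ is determined, and the identity $\widehat{\bX} = \bM + g(y^*)\bN$ forces $\widehat{\bX} = \bX$.

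No step here looks truly hard: the forward direction is just linearity of the trace combined with the already-used identities for $\bM$ and $\bN$, and the backward direction is the same computation read the other way. The only place where one has to be mildly careful is confirming that $\hat y$ lands in the interval $[\alpha,\beta]$ on which $g$ is defined, which follows from the hypothesis that $h$ takes values in $[\alpha,\beta]$.
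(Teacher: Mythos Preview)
Your proposal is correct and follows essentially the same route as the paper for the existence part: apply the linear map $h$ to $\bX=\bM+g(y^*)\bN$, use the scalar equation to identify $h(\bX)=y^*$, and conclude $\bX=\bM+f(\bX)\bN$, which is equivalent to (\ref{eqn:main}). The paper's proof in fact stops there and does not address the uniqueness claim at all; your converse argument---showing that any solution $\widehat\bX$ of (\ref{eqn:main}) yields a root $\hat y=h(\widehat\bX)$ of (\ref{eqn:f_nonlin}) and hence must agree with $\bX$ when $y^*$ is unique---fills that gap and is the natural way to do it.
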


\begin{proof}
Let $\bX\equiv\bM + g(y^*) \bN$. Applying the linear function $h$ to both sides we obtain
$h(\bX)=h(\bM) + g(y^*) h(\bN)$. We recall that
$h(\bM) + g(y^*) h(\bN) =  y^*$, therefore it must be that $h(\bX)=y^*$, that is,
$\bX=\bM + f(\bX) \bN$, which is equivalent to (\ref{eqn:main}).
\end{proof}

The quantities $\gamma_1, \gamma_2$ play a crucial role in the existence of (at least) one
solution to (\ref{eqn:f_nonlin}). In turn, these scalars depend on the eigenvalues of
the two Sylvester solutions, and thus on $\bA, \bB, \bC$ and $\bD$.
We abstain from exploring all possible cases of the nonlinear scalar problem, as our focus is 
on the difficulties stemming from the matrix setting. Below we give a sample of
theoretical and computational considerations that can be of help in solving the final problem,
keeping in mind that several other strategies could be used.

To explore the influence of the data on the nonlinear scalar equation, we 
assume $\gamma_2\ne 0$ and rewrite (\ref{eqn:f_nonlin}) as
\begin{equation}\label{eqn:graph}
g(y) =  -\frac{\gamma_1}{\gamma_2} + \frac{1}{\gamma_2} y ,
\end{equation}
and set $g_1(y)\equiv-\frac{\gamma_1}{\gamma_2} + \frac{1}{\gamma_2} y$, where
the function $g_1$ is linear and defined on the whole real line.
Hence, $y^*$ is a solution to (\ref{eqn:f_nonlin})
in $[\alpha,\beta]$ if and only if the two functions $g$ and $g_1$ intersect (at $y^*$). 
For simplicity, let us assume that $[\alpha,\beta]\equiv \RR$. If for instance 
$g$ (resp. $g_1$) is monotonically decreasing (resp. increasing) in $\RR$, then 
$y^*$ exists and is unique. This behavior depends on the choice of $g$, but also
on the sign of $\gamma_1$ and $\gamma_2$, which in turn depends
on the properties of the matrices $\bM, \bN$.
Examining all possible combinations of these properties would be cumbersome.
We provide here a typical setting.

\begin{proposition}
Assume that $\bM$ ($\bN$) is symmetric and positive (negative)
definite, and $g(y)\ge 0$ for any $y\ge 0$,
$g$ at least $C^2$ and monotonically decreasing. Then the Newton iteration $\{y_k\}$ 
applied to $F(y)=0$ with $F(y)=\gamma_1 + g(y)\gamma_2 - y$
will converge for any $y_0\ge 0$.
\end{proposition}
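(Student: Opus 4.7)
My plan is to reduce the global convergence of Newton's method on $F$ to a classical Ostrowski / Newton--Fourier monotone convergence argument for a strictly decreasing concave function with a unique root. The first step I would take is to translate the matrix definiteness into sign information on the scalar data: $\bM\succ 0$ forces $\gamma_1=\trace(\bM)>0$ and $\bN\prec 0$ forces $\gamma_2=\trace(\bN)<0$. These two signs are what makes the subsequent monotonicity and concavity arguments go through.

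Next I would establish the relevant shape of $F(y)=\gamma_1+g(y)\gamma_2-y$ on $[0,\infty)$. From $F'(y)=g'(y)\gamma_2-1$, together with $g'\le 0$ and $\gamma_2<0$, the product $g'(y)\gamma_2$ is nonnegative; the fact that $g$ is decreasing and bounded below by $0$ forces $g'(y)\to 0$ at infinity, hence $F'(y)\to -1$, so $F$ is strictly decreasing on a tail $[R,\infty)$. Combined with $F(y)\to -\infty$ and the intermediate value theorem applied from $F(0)=\gamma_1+g(0)\gamma_2$, this yields a root $y^\star\ge 0$. For the concavity of $F$ I would use $F''(y)=g''(y)\gamma_2$, which is nonpositive as soon as $g$ is convex --- and the canonical nonnegative decreasing smooth functions motivating the analysis (e.g.\ $g(y)=e^{-y}$) are convex, giving $F$ concave on $[0,\infty)$ and $y^\star$ unique.

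With $F$ strictly decreasing and concave on the relevant domain, convergence then follows by the same Newton--Fourier style reasoning already exploited in Theorem~\ref{th:errorbound2}: at any iterate $y_k$ the tangent line to $F$ lies above its graph by concavity, so its root $y_{k+1}$ satisfies $y_{k+1}\ge y^\star$; from $k\ge 1$ on, the sequence $\{y_k\}$ is monotonically decreasing and bounded below by $y^\star$, hence convergent, and by $F'(y^\star)\ne 0$ and continuity its limit is exactly $y^\star$.

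The step I expect to require the most care is the opening one: showing that from an arbitrary $y_0\ge 0$ the first Newton iterate $y_1$ already lies in the strictly-decreasing concave regime where the above chain runs. The stated hypotheses on $g$ do not by themselves preclude $F'(y_0)\ge 0$ for small $y_0$, so I would need either convexity of $g$ together with the observation that a concave $F$ with $F(+\infty)=-\infty$ forces the tangent at any point of its increasing part to cross the axis past $y^\star$, or a quantitative coupling condition such as $|g'(0)\gamma_2|<1$. Filling this gap is the main technical content of the proof.
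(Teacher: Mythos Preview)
Your reading of the definiteness gives $\gamma_2=\trace(\bN)<0$, and all the complications you describe (possible positivity of $F'$ for small $y$, the delicate first-step analysis) stem from that sign. The paper's own proof, however, opens by asserting that the hypotheses on $\bM,\bN$ force \emph{both} $\gamma_1$ and $\gamma_2$ to be positive; this is inconsistent with the proposition as printed, so almost certainly the intended hypothesis is $\bN\succ 0$ (or equivalently a typo in the statement). Under $\gamma_2>0$ the argument the paper actually gives is much shorter than yours: since $g'\le 0$ one has $F'(y)=g'(y)\gamma_2-1\le -1<0$ on all of $[0,\infty)$, so $F$ is strictly decreasing everywhere, $F(0)=\gamma_1+g(0)\gamma_2>0$, $F(y)\to-\infty$, and a unique root $y^\star$ exists. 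The paper then uses \emph{convexity} of $F$ (i.e.\ $F''=g''\gamma_2>0$), not concavity, and the standard Newton--Fourier bracketing: from $y_0\in[0,y^\star)$ convexity gives $y_1\in(0,y^\star)$, and from $y_0=b>y^\star$ the explicit formula $y_1=(b g'(b)\gamma_2-\gamma_1-g(b)\gamma_2)/F'(b)$ is checked to be positive, so the iterates stay in $[0,b]$ and a classical theorem closes the argument. The ``careful first step'' you single out simply does not arise with this sign.

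One point applies equally to your sketch and to the paper: the curvature conclusion ($F$ convex for them, concave for you) requires $g''\ge 0$, which is \emph{not} among the stated hypotheses ($g\ge 0$, $C^2$, monotonically decreasing). The motivating example $g(y)=e^{-y}$ is convex, but a decreasing nonnegative $C^2$ function need not be. Both arguments silently assume convexity of $g$; you flag this explicitly, the paper does not.
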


\begin{proof}
Note that the hypothesis on $\bM, \bN$ implies that $\gamma_1, \gamma_2$ are
both positive real values. Moreover, the hypotheses on $g$ also imply
that $F$ is at least $C^2$,
$F'(y)<0$ and $F''(y)>0$ for all positive $y$, so that $F$ is convex in $[0,+\infty)$.
Moreover,  $\lim_{y\to +\infty} F(y) = -\infty$. Since $F(0)>0$, a zero $y^*$ must
exist. 
The tangent passing through $y=0$ encounters the first coordinate axis
at $y_1= -F(0)/F'(0) >0$. Convexity ensures that $y_1<y^*$.
The tangent passing through $y=b$ for some $b>y^*$ encounters the first coordinate axis
at $y_1= b-F(b)/F'(b) = ( b g'(b)\gamma_2 -\gamma_1 - g(b) \gamma_2) /F'(b) > 0$
for all $b>0$. A known theorem ensures that the Newton iteration converges
in any interval $[0,b]$ with $y^* \in [0,b]$.
\end{proof}

\vskip 0.05in
\begin{example}
Let $g(t) = \exp(-t)$, so that $f(\bX) = \exp(-{\rm tr}(\bX))$.
Then (\ref{eqn:phi}) becomes $\gamma_1 - e^{-y}\gamma_2 - y = 0$, for $y \in  \RR$.
\end{example}
\vskip 0.05in

As an alternative to the Newton method, one can resort once again to a fixed point iteration. 
A natural choice, but not necessarily the best one, is given by
$$
y^{(k+1)} = \gamma_1+ g(y^{(k)})\gamma_2 \equiv \Phi(y^{(k)}).
$$
If a zero $y^*$ exists such that $|\Phi'(y^*)|<1$ then Ostrowski's theorem ensures that
there exists an open interval centered in $y^*$ such that this iteration will converge for any $y^{(0)}$
taken in this interval. Hence, the condition is that $|g'(y^*)\gamma_2| <1$.

As an example, let use take $g(y) = \ln(y)$, for $y>0$, so that $f(\bX)=\ln({\rm trace}(\bX))$. Then
$g'(y)= 1/y$ and $|\Phi'(y^*)|<1$ as long as $y^*>\gamma_2$. The existence of $y^*$ depends on
whether the curves $g(y)$ and $g_1(y)$ intersect, and as said around (\ref{eqn:graph}), this depends on
the mutual values of $\gamma_1, \gamma_2$.

\section{Considerations on the large scale case}
Problem (\ref{eqn:main}) becomes computationally very challenging if the given matrices have
large dimensions. Let $\widetilde \bM, \widetilde \bN$ be the approximations to the solutions 
$\bM$ and $\bN$ respectively, of the Sylvester equations.
If $f$ is linear, say $f(\bX)={\rm trace}(\bX)$, then 
from Proposition~\ref{prop:f(X)}  an approximate solution is obtained as
$$
\widetilde \bX \equiv \widetilde \bM +\sigma \widetilde \bN, \quad
\sigma = \frac{f(\widetilde\bM)}{1-f(\widetilde\bN)},
$$
with a clear dependence of the error $\bX - \widetilde \bX$ on the error committed
in approximating $\widetilde \bM, \widetilde \bN$.

For the approximation of $\bM, \bN$ different methods can be considered, especially in case the
right-hand sides $\bD$ and $\bC$ have low rank \cite{Simoncini.survey.16};
see also \cite{Palitta.Simoncini.18b} for the sparse setting.
Structural or sparsity properties are in fact a crucial hypothesis to be able to store 
$\widetilde \bM, \widetilde \bN$ and thus $\widetilde \bX$ in a memory saving, factored format.
Evaluating the trace can also profit from a factored form.
If projection methods are used to determine $\widetilde \bM, \widetilde \bN$ \cite{Simoncini.survey.16}, then 
the same type of projection strategy could be applied directly to (\ref{eqn:main}), so that the residual can be monitored
explicitly.
For the approximation $\widetilde \bX$ the associated residual is
$$
\bR = \bA \widetilde \bX + \widetilde \bX \bB + f(\widetilde \bX) \bC - \bD,
$$
which yields the following relation with the error matrix $\bE\equiv\widetilde \bX - \bX^\star$,
$$
\bR = \bA \bE + \bE \bB + (f(\widetilde \bX) - f(\bX^\star)) \bC.
$$
If $f$ is linear, then $f(\widetilde \bX) - f(\bX^\star) = f(\bE)$, hence it follows
$$
\bE = {\cal L}^{-1} (\bR) + f(\bE)\bN,
$$
which is the natural (linear) generalization of the known expression for the error matrix
in terms of the residual in linear algebraic equations.
%

Dealing with a nonlinear-linear $f$ is similar to the linear case, since the matrix $h(\bX)$ is a scalar,
after which the nonlinear function acts as in section~\ref{sec:nonlin_lin}.
The linear-nonlinear case analyzed in section~\ref{sec:lin_nonlin}
with large matrices is far more complicated. 
Assuming that the problem to be solved can again be written as
$\widetilde \bX = \widetilde \bM +f(\widetilde\bX) \widetilde \bN$,
a fixed-point iteration could be considered, possibly taking into account
memory saving representations of $\widetilde \bX, \widetilde \bM$ and $\widetilde \bN$, that is
$$
\widetilde \bX^{(k+1)} \equiv \widetilde \bM +f(\widetilde\bX^{(k)}) \widetilde \bN .
$$
However, how to {\it approximate} $f(\widetilde\bX^{(k)})$ remains complicated.
Consider for instance $f(\widetilde\bX)= {\rm trace}(\psi(\widetilde\bX))$.
The approximation of this function is a problem in its own, and different,
mostly iterative, approaches have been devised. This will give rise to an inner-outer
procedure for the fixed point scheme above.
Now popular choices for approximating 
${\rm trace}(\psi(\widetilde\bX))$ include randomized, Monte-Carlo and probing methods, which replace
the trace computation with the product
$z_k^T\psi(\bX) z_k$ for a selection of vectors $\{z_k\}$; see,
 e.g., \cite{cortinovis2020randomized},\cite{UbaruChenSaad.17} and their references.
Since in general we cannot expect high accuracy in this computation at each iteration,
the quality of the outer iteration may be considerably affected. A detailed analysis and
experimental study of these approaches is left for future research.

\section{Conclusions}
We have analyzed a new class of quasi-linear matrix equations, devising solutions in
closed form for the linear case. In the quasi-linear framework, we have proposed numerical
methods and theoretically studied their convergence under hypotheses that are satisfied for a wide
class of problem data. The large scale problem remains particularly challenging,
especially when involving the computation of matrix functions, for which further
work is required.

%

\bibliographystyle{siam}
\bibliography{/home/valeria/Bibl/Biblioteca,./marghe}

\end{document}